\newcommand{\VO}{$B_0$-VPG}
\newcommand{\F}{{\mathcal F}}
\newtheorem{thm}{Theorem}
\newtheorem{defi}[thm]{Definition}
\newtheorem{prop}[thm]{Proposition}
\newtheorem{lem}[thm]{Lemma}
\newtheorem{cor}[thm]{Corollary}
\newtheorem{remark}{Remark}
\newenvironment{proof}{\textit{Proof.}}{\hfill $\Box$ \\}
\newcommand\blfootnote[1]{%
  \begingroup
 \renewcommand\thefootnote{}\footnote{#1}%
  \addtocounter{footnote}{1}%
  \endgroup
}
\begin{document}

\title{Vertex intersection graphs of paths on a grid: characterization within block graphs}

\author[a]{Liliana Alc\'on}
\author[b,c]{Flavia Bonomo}
\author[a,c]{Mar\'\i a P\'\i a Mazzoleni}

\affil[a]{Departamento de Matem\'atica, FCE-UNLP, La Plata,
Argentina.}
\affil[b]{Departamento de Computaci\'on, FCEN-UBA, Buenos Aires,
Argentina.}
\affil[c]{CONICET}

\date{}

\maketitle

\vspace{-1cm}

\blfootnote{E-mail addresses: liliana@mate.unlp.edu.ar;
fbonomo@dc.uba.ar;
pia@mate.unlp.edu.ar. \\
This work was partially supported by UBACyT Grant
20020130100808BA, CONICET PIP 122-01001-00310 and
112-201201-00450CO and ANPCyT PICT 2012-1324 (Argentina).}

\begin{abstract}
We investigate graphs that can be represented as vertex
intersections of horizontal and vertical paths in a grid, the so
called $B_0$-VPG graphs. Recognizing this class is an NP-complete
problem. Although, there exists a polynomial time algorithm for
recognizing chordal $B_0$-VPG graphs. In this paper, we present a
minimal forbidden induced subgraph characterization of $B_0$-VPG
graphs restricted to block graphs.  As a byproduct, the proof of the main
theorem provides an alternative certifying recognition and representation
algorithm for $B_0$-VPG graphs in the class of block graphs.

\

\noindent \textbf{Keywords.} vertex intersection graphs, paths on
a grid, forbidden induced subgraphs, block graphs.
\end{abstract}

\section{Introduction}

A \textit{VPG representation} of a graph $G$ is a collection of
paths of the two-dimensional grid where the paths represent the
vertices of $G$ in such a way that two vertices of $G$ are
adjacent in $G$ if and only if the corresponding paths share at
least one vertex of the grid. A graph which has a VPG
representation is called a \textit{VPG graph}. In this paper, we
consider the subclass $B_0$-VPG.\

A \textit{B$_0$-VPG representation} of $G$ is a VPG representation
in which each path in the representation is either a horizontal
path or a vertical path on the grid. A graph is a
\textit{B$_0$-VPG graph} if it has a $B_0$-VPG representation.\

Representations by  intersections of paths on grids arise naturally in
the context of circuit layout problems and layout optimization
\cite{sinden} where a layout is modelled as paths (wires) on a
grid. Often one seeks to minimize the number of times a wire is
bent \cite{bandy,molitor} in order to minimize the cost or
difficulty of production. Other times layout may consist of
several layers where the wires on each layer are not allowed to
intersect. This is naturally modelled as the coloring problem on
the corresponding intersection graph.\

The recognition problem is NP-complete for both VPG and $B_0$-VPG
graphs (see \cite{asinowski} for more details about this and
related results). Since all interval graphs are $B_0$-VPG graphs,
it is natural to consider other subclasses of chordal $B_0$-VPG
graphs. In \cite{golumbic}, certain subclasses of $B_0$-VPG graphs
have been characterized and shown to admit a polynomial time
recognition; namely split, chordal claw-free and chordal bull-free
$B_0$-VPG graphs. Recently, in \cite{chaplick} the authors present
a polynomial time algorithm for deciding whether a given chordal
graph is a $B_0$-VPG graph. In \cite{asinowski2}, it was shown
that chordal $B_0$-VPG graphs are equivalent to the strongly
chordal $B_0$-VPG graphs.\

 In this paper, we consider $B_0$-VPG graphs more from a structural point of view.
We present a minimal forbidden induced subgraph characterization
of $B_0$-VPG
graphs restricted to block graphs. As a byproduct, the proof of the main
theorem provides an alternative recognition and representation
algorithm for $B_0$-VPG graphs in the class of block graphs.

\section{Preliminaries}
\label{s:preliminares} In this paper all graphs are connected,
finite and simple. Notation we use is that used by Bondy and Murty
\cite{bondy}.\

 Let $G=(V,E)$ be a graph with vertex set $V$ and
edge set $E$.

%For a vertex $v\in V$, we let $N(v)$ denote the set of vertices in
%$G$ that are adjacent to $v$, that is, the neighbors of $v$.
%$N(v)$ is called the \textit{neighborhood} of vertex $v$. We will
%write $N[v]=N(v)\cup \{v\}$, and call $N[v]$ the \textit{closed
%neighborhood} of vertex $v$. For $X\subseteq V$ we define
%$N(X)=\bigcup_{v\in X} N(v)$ and $N[X]= \bigcup_{v\in X} N[v]$. We
%denote by $G[X]$ the subgraph induced by $X$. A vertex is called
%\textit{simplicial} if $G[N(v)]$ is a clique.\

We write $G-v$ for the subgraph obtained by deleting a vertex $v$
and all the edges incident to $v$. Similarly, for $A\subseteq V$,
we denote by $G-A$ the subgraph of $G$ obtained by deleting the
vertices in $A$ and all the edges incident to them, that is,
$G-A=G[V\backslash A]$. \

If $H$ is a graph, a graph $G$ is \emph{$H$-free} if $G$ contains
no induced $H$ subgraph isomorphic to $H$. If $\mathcal H$ is a
collection of graphs, the graph $G$ is \emph{$\mathcal H$-free} if
$G$ is $H$-free for every $H\in\mathcal H$. \

%We denote by $P_n$, $n\geq 2$, the induced path on $n$ vertices.
%We denote by $C_n$, $n\geq 3$, the chordless cycle on $n$ vertices.

A \textit{complete} is a set of pairwise adjacent vertices.
% and a \textit{stable set} is a set of pairwise nonadjacent vertices.
A \textit{clique} is a complete which is not properly contained in
another complete. A \textit{thin spider} $N_n$ is the graph whose
$2n$ vertices can be partitioned into a clique $K=\{c_1,...c_n\}$
and a set $S=\{s_1,...,s_n\}$ of pairwise nonadjacent vertices
such that, for $1\leq i,j\leq n$, $s_i$ is adjacent to $c_j$ if
and only if $i=j$. We say that $N_n$ is a thin spider of size $n$.

%A \textit{VPG representation} of a graph $G$ is a collection of
%paths of the two-dimensional grid where the paths represent the
%vertices of $G$ in such a way that two vertices of $G$ are
%adjacent in $G$ if and only if the corresponding paths share at
%least one vertex of the grid. A graph which has a VPG
%representation is called a \textit{VPG graph}. In this paper, we
%consider the subclass $B_0$-VPG.\

%A \textit{B$_0$-VPG representation} of $G$ is a VPG representation
%in which each path in the representation is either a horizontal
%path or a vertical path on the grid. A graph is a
%\textit{B$_0$-VPG graph} if it has a $B_0$-VPG representation.\

The following lemma will be use in our paper.

\begin{lem} \cite{asinowski2} In a $B_0$-VPG representation of a clique, all the
corresponding paths share a common grid point.\end{lem}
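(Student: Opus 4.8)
The plan is to argue directly from the geometry of horizontal and vertical paths in the grid. Let $K$ be the clique, and for each vertex $v$ let $P_v$ be the path representing it, so that $P_v$ is either \emph{horizontal} (contained in a single row) or \emph{vertical} (contained in a single column). I would first record two elementary facts. First, two intersecting horizontal paths must lie on the same row and then meet in a set of consecutive grid points, and symmetrically two intersecting vertical paths share a column; this is forced because a horizontal path occupies exactly one row and a vertical path exactly one column. Second, a horizontal path on row $y$ and a vertical path on column $x$ can share at most the single grid point $(x,y)$, so if they intersect at all they meet precisely there.

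Next I would split into cases according to the orientations present. If every path is horizontal, then pairwise intersection forces them all onto one common row, and on that row each path is an interval of consecutive grid points. Pairwise-intersecting intervals on a line have a common point by the one-dimensional Helly property: the largest left endpoint does not exceed the smallest right endpoint, and any grid point between these two lies in all the intervals simultaneously. The case in which every path is vertical is identical with the roles of rows and columns interchanged.

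For the mixed case, suppose both orientations occur. If there are at least two horizontal paths they share a common row $y_0$, and if there are at least two vertical paths they share a common column $x_0$; then by the second observation each horizontal path meets each vertical path exactly at $(x_0,y_0)$, so every path of $K$ contains $(x_0,y_0)$, which is the desired common grid point. When one orientation has only a single representative, say a unique vertical path $V$ on column $x_0$ while the horizontal paths all lie on row $y_0$, each horizontal path still meets $V$ only at $(x_0,y_0)$ and hence contains it, and $V$ contains it as well; the symmetric situation, and the trivial case of exactly one path of each orientation, are handled the same way.

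The only step needing genuine care is the single-orientation case, where the conclusion is really the one-dimensional Helly property for intervals rather than anything special to the grid. Once at least two paths of each orientation are present, the forced crossing point $(x_0,y_0)$ makes the common point immediate, so I do not expect a substantial obstacle beyond bookkeeping the orientation cases.
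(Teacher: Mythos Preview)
Your argument is correct. The case analysis is clean: once you observe that two horizontal paths can only meet on a common row (and dually for vertical), the all-horizontal and all-vertical cases reduce to one-dimensional Helly, while in the mixed case the forced common row $y_0$ and common column $x_0$ pin down the crossing point $(x_0,y_0)$ that every path must contain. The degenerate subcases (a single path of one orientation) are handled as you describe.

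As for comparison with the paper: there is nothing to compare. The paper does not prove this lemma at all; it simply quotes it from \cite{asinowski2} and moves on. Your write-up therefore supplies a self-contained proof where the paper relies on an external reference. If anything, your proof is exactly the kind of short direct argument one would expect for this folklore-level fact, so including it would make the exposition more self-contained at negligible cost.
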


We will distinguish between two types of $B_0$-VPG representations
of a clique: a \textit{line clique} and a \textit{cross clique}.
We say that a clique is represented as a line clique if all paths
corresponding to the vertices of the clique use a common row or a
common column and intersect on at least one grid point of that row
or column. A clique is said to be represented as a cross clique if
the paths corresponding to the vertices of the clique share
exactly one grid point, say $(x_i,y_j)$, and there exists at least
one such path which uses column $x_i$ and at least one such path
which uses row $y_j$. The grid point $(x_i,y_j)$ is called the
\textit{center} of the cross clique (see Figure \ref{f:cliques}
for examples). It is easy to see that any $B_0$-VPG representation
of a clique is either a line clique or a cross clique.\

\begin{figure}[h]
\centering{
\includegraphics[width=.7\textwidth]{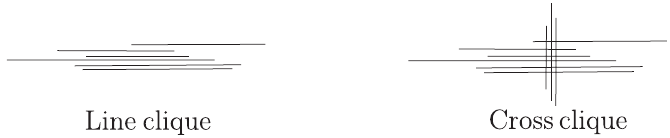}
\caption{A line clique and a cross clique.}\label{f:cliques}}
\end{figure}

\

%The following lemma will be used in several occasions throughout
%this paper. A \textit{diamond} is a graph obtained from $K_4$ by
%deleting exactly one edge.

%\begin{lem}\cite{golumbic} Let $G$ be a diamond with vertex set
%$V=\{a,b,c,d\}$ and edge set $E=\{ab,ac,bc,bd,cd\}$. Then in any
%%$B_0$-VPG representation of $G$, $P_b$ and $P_c$ use a common
%horizontal or a common vertical grid line.\end{lem}

%The following lemma concerning the neighborhood of any vertex in a
%$B_0$-VPG graph $G$ will be also used often.

%\begin{lem}\cite{golumbic} Let $G=(V,E)$ be a $B_0$-VPG graph.
%Then $G[N[v]]$ is an interval graph for every vertex $v$ in
%$G$.\end{lem}

\section{Block graphs} \label{s:our results}

In this Section we will give a characterization of $B_0$-VPG
graphs restricted to block graphs by a family of minimal forbidden induced subgraphs.

%We will distinguish between graphs which have a diamond as induced
%subgraph and graphs which have not it.

%We have to remember the following definition:

\begin{defi} A \textit{block graph} is a connected graph in which every two-connected component (block) is a clique.
\end{defi}

A \textit{diamond} is a graph obtained from $K_4$ by deleting
exactly one edge. A graph is called \textit{chordal} if it does
not contain any chordless cycle of length at least four. It is
known that block graphs are connected chordal diamond-free graphs.

A \textit{cutpoint} is a vertex whose removal from the graph increases the number of connected components.
% That is, it makes some points unreachable from some others. It disconnects the graph.

\begin{defi} Let $G$ be a block graph. An \textit{endblock} is a block
having exactly one cutpoint. An \textit{almost endblock} is a
block $B$ having at least two cutpoints and such that exactly one of these cutpoints belongs to blocks
(different from $B$) that are not endblocks. An \textit{internal
block} is a block that is neither an endblock nor an almost
endblock.

We will call \textit{3-cutpoints} to cutpoints that belong to
exactly 3 blocks, and \textit{2-cutpoints} to cutpoints that
belong to exactly 2 blocks, one of which is an endblock.\end{defi}

\begin{defi} \cite{harari-prins} The \textit{block-cutpoint-tree} $bc(G)$ of a graph $G$ is a graph whose vertices are in one-to-one correspondence with the blocks and cutpoints of $G$, and such that two vertices of $bc(G)$
are adjacent if and only if one corresponds to a block $H$ of $G$
and the other to a cutpoint $c$ of $G$, and $c$ is in $H$.
\end{defi}

%\begin{defi}
%A \textit{pruning sequence} of a tree is an ordering $\{v_1,
%\dots, v_n\}$ of its vertices, such that each $v_i$ is a leaf of
%the subgraph (indeed subtree) induced by the vertices $\{v_1,
%\dots v_i\}$.
%\end{defi}

The graph $N_5$, defined in \cite{golumbic}, is the thin spider of
size $5$, i.e., is a split graph which consists of a clique graph
$\{c_1,\dots, c_5\}$, and a set $\{s_1,\dots, s_5\}$ of pairwise nonadjacent vertices such
that $s_i$ is adjacent to $c_j$ if and only if $i = j$.

We let $\F$ denote the family of block graphs obtained from $N_5$
by a finite sequence of applications of the following procedure:
let H be a complete subgraph of size 4 in $G$ having at least two
2-cutpoints, say $v_1$ and $v_2$, with endblocks $B_1$ and $B_2$,
respectively. We contract $v_1$ and $v_2$ into a single vertex
$x$. Then, we replace $B_1 -\{x\}$ and $B_2 - \{x\}$ by two thin
spiders of size 3, making $x$ adjacent to the vertices of the
cliques of both the spiders. In Figure \ref{f:BLOCK} we offer some
examples of graphs in $\F$.

\begin{figure}[h]
\centering{
\includegraphics[width=\textwidth]{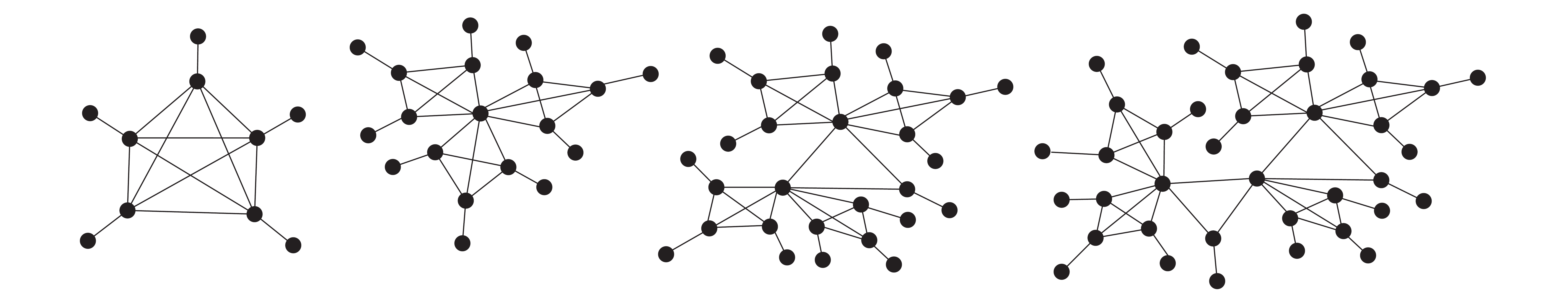}
\caption{Some examples of graphs in  $\F$.}\label{f:BLOCK}}
\end{figure}

\begin{prop}\label{prop:F}
Properties of graphs in $\F$, different from $N_5$:
\begin{enumerate}[i.]
\item\label{i:1} each block is of size at most 4;

\item\label{i:2} all the vertices are either leaves, 2-cutpoints
or 3-cutpoints;

\item\label{i:3} the endblocks are of size 2 and have a
2-cutpoint;

\item\label{i:4} the almost endblocks are of size 4 and have three
2-cutpoints and one 3-cutpoint;

\item\label{i:5} the internal blocks are of size 3 and have one
2-cutpoint and two 3-cutpoints;

\item\label{i:6} a graph in $\F$ obtained from $N_5$ by applying
the procedure $k$ times, $k \geq 1$, has $6(k+1)$ blocks
($4(k+1)+1$ endblocks, $k+2$ almost endblocks, and $k-1$ internal
blocks), $5(k+1)$ cutpoints ($k$ 3-cutpoints and $4(k+1)+1$
2-cutpoints), and $9(k+1)+1$ vertices.
\end{enumerate}
\end{prop}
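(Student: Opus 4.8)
The plan is to argue by induction on the number $k\ge 1$ of applications of the procedure, using items (i)--(v) as the induction hypothesis to control the local structure just before each application, and then reading off item (vi) from a purely local bookkeeping. A useful observation to record first is that the increments in all the counted quantities will turn out to be independent of which admissible $H$ and which pair $v_1,v_2$ is chosen, so the totals depend on $k$ alone.

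For the base case $k=1$ I would simply carry out the unique kind of first application and describe the resulting graph explicitly. Here one must note that the first step is genuinely special: in $N_5$ the central block is a $K_5$, so $H$ is a proper $K_4$-subgraph of a block, and contracting two of its vertices turns the $K_5$ into a $K_4$ rather than into a triangle. A direct inspection of the resulting $19$-vertex graph (one central $K_4$ almost endblock, two further $K_4$ almost endblocks sharing the contracted vertex $x$, and nine pendant edges) verifies items (i)--(v) and matches the closed forms of item (vi) at $k=1$.

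For the inductive step, assume $G$ is obtained by $k\ge 1$ applications and satisfies (i)--(v), and apply the procedure once more to a complete subgraph $H$ of size $4$. By (i) every block of $G$ has size at most $4$, and since a $K_4$ is $2$-connected it lies inside a single block; hence $H$ is itself a block of size $4$. By (iii) and (v) the blocks of size $2$ and $3$ are endblocks and internal blocks, so a size-$4$ block must be an almost endblock, and by (iv) it has exactly three $2$-cutpoints $v_1,v_2,v_3$ and one $3$-cutpoint $w$; the chosen endblocks $B_1,B_2$ are the pendant $K_2$'s at $v_1,v_2$ guaranteed by (iii). I would then trace the procedure block by block: contracting $v_1,v_2$ turns the $K_4$ $H$ into a triangle on $\{x,v_3,w\}$ (with $x$ the new contracted vertex), the two pendant edges are replaced by two thin spiders of size $3$, and making $x$ adjacent to each spider clique creates two new $K_4$'s and six new pendant edges. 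A short check shows that the two new $K_4$'s are almost endblocks (their unique $3$-cutpoint is $x$) and the six new pendant edges are endblocks.

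The crux is to show that the former $H$ has now become an \emph{internal} block rather than an almost endblock, and that no other block changes its type. For the first point, in the new triangle the vertex $v_3$ is a $2$-cutpoint meeting only its own pendant endblock, while both $x$ and $w$ meet non-endblocks: $x$ lies in the two new $K_4$'s, and $w$ lies in two non-endblocks already present in $G$ because, by (iii), an endblock carries a $2$-cutpoint, so no $3$-cutpoint such as $w$ can belong to an endblock. Thus the triangle has one $2$-cutpoint and two $3$-cutpoints, i.e. it is internal. For the second point one only inspects the blocks meeting $H$ at $w$ and at $v_3$: these keep their cutpoints and see $H$ as a non-endblock both before and after (almost endblock, then internal), so their classification is unchanged. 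Finally I would tally the local changes --- $+6$ blocks ($-2$ pendant endblocks removed, $+2$ new $K_4$'s, $+6$ new pendant edges, with $H$ surviving as one block), giving net $+4$ endblocks, $+1$ almost endblock, $+1$ internal block, $+1$ $3$-cutpoint, $+4$ $2$-cutpoints and $+9$ vertices --- and verify that these increments carry the closed forms of (vi) from $k$ to $k+1$. The main obstacle I anticipate is precisely this invariance argument for the blocks adjacent to $H$, since it is the only place where a modification localized at $H$ could propagate and alter the global classification.
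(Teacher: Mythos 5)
Your proposal is correct and takes essentially the same route as the paper's own proof: induction on the number of applications of the procedure, with the base case verified by direct inspection of the $19$-vertex graph, and the inductive step using items \emph{i.}--\emph{v.} to identify $H$ as an almost endblock, showing it becomes an internal block (via item \emph{iii.} applied to the $3$-cutpoint $w$) while the two new $K_4$'s become almost endblocks, and then carrying item \emph{vi.} by the local increments $+6$ blocks, $+5$ cutpoints, $+9$ vertices. The extra details you supply (why a $K_4$ must itself be a block of $G$, and the special $K_5 \to K_4$ contraction in the first step) are points the paper leaves implicit, so nothing in your plan diverges from or falls short of the published argument.
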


\begin{proof}
We will prove it by induction on the number of times we apply the
procedure. By symmetry of $N_5$, there is only one graph obtained
by applying the procedure once (Figure \ref{f:BLOCK}), and it has
no internal blocks. It is easy to verify that this graph satisfies
the properties claimed.

Suppose the properties are satisfied by all graphs in $\F$
obtained from $N_5$ by applying the procedure $k$ times, $k \geq
1$, and let $G$ be one such graph. Let us apply the procedure once
more. Let $H$ be a complete subgraph of size 4 in $G$. By
inductive hypothesis, $H$ is an almost endblock of $G$, and has
three 2-cutpoints and one 3-cutpoint. By item \emph{\ref{i:3}},
the blocks incident to the 3-cutpoint are not endblocks.

Choose two vertices $v_1$ and $v_2$ which are 2-cutpoints, and let
$B_1$ and $B_2$ be the endblocks incident with $v_1$ and $v_2$,
respectively. By item \emph{\ref{i:3}}, $B_1$ and $B_2$ are of
size 2. Contract $v_1$ and $v_2$ into a single vertex $x$, and
replace $B_1 - \{x\}$ and $B_2 - \{x\}$ by two thin spiders of
size 3, induced respectively by the vertices
$\{c_1,c_2,c_3,s_1,s_2,s_3\}$ and
$\{c'_1,c'_2,c'_3,s'_1,s'_2,s'_3\}$, making $x$ adjacent to the
vertices of the cliques of both the spiders, i.e,
$\{c_1,c_2,c_3,c'_1,c'_2,c'_3\}$.

After the procedure, $H' = H -\{v_1,v_2\} \cup \{x\}$ is a block
of size 3, and it has two 3-cutpoints and still one 2-cutpoint.
The new blocks $\{c_1,c_2,c_3,x\}$ and $\{c'_1,c'_2,c'_3,x\}$ are
almost endblocks, they are of size 4 and have three 2-cutpoints
and one 3-cutpoint, namely $x$. And since the blocks incident to
the other 3-cutpoint of $H'$ are not endblocks, $H'$ is an
internal block. The six new endblocks $\{c_i,s_i\}$ and
$\{c'_i,s'_i\}$, $i=1,2,3$ have a 2-cutpoint each (vertices $c_i$
and $c_i'$) and a leaf each (vertices $s_i$ and $s_i'$). The
remaining blocks as well as their conditions are not affected. So
items \emph{\ref{i:1}--\ref{i:5}} are satisfied by the new graph.
To see item \emph{\ref{i:6}}, notice that we have replaced 2
endblocks by 8 new blocks, 6 of which are endblocks and 2 of which
are almost endblocks. Also, one almost endblock has become an
internal block. We have replaced 4 vertices by 13 vertices and, in
particular, two 2-cutpoints by one 3-cutpoint and six 2-cutpoints.
\end{proof}

\begin{cor} The family $\F$ is infinite. \end{cor}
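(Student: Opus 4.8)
The plan is to show that $\mathcal{F}$ is infinite by establishing that the family contains graphs of arbitrarily large size. The cleanest route is to invoke item \ref{i:6} of Proposition \ref{prop:F}, which was just proved. That item explicitly computes the number of vertices of a graph in $\mathcal{F}$ obtained from $N_5$ by applying the procedure $k$ times: namely $9(k+1)+1$ vertices. Since this quantity is strictly increasing in $k$ and tends to infinity as $k\to\infty$, the graphs produced for distinct values of $k$ have distinct orders and are therefore pairwise non-isomorphic. Hence $\mathcal{F}$ contains infinitely many graphs.

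To carry this out, I would first note that $\mathcal{F}$ is defined as the set of all block graphs obtained from $N_5$ by a finite sequence of applications of the contraction-and-replacement procedure, so for every $k\geq 0$ there exists at least one graph in $\mathcal{F}$ arising from exactly $k$ applications (the case $k=0$ being $N_5$ itself, and $k\geq 1$ being covered by the proposition). Then I would observe that the vertex count $9(k+1)+1$ is a strictly monotone function of $k$: the formula gives $19$ vertices for $k=1$, $28$ for $k=2$, and in general increases by $9$ with each additional application. Distinct vertex counts immediately rule out isomorphism between the graphs for different $k$, so the map $k\mapsto$ (order of the graph) witnesses infinitely many distinct members.

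The main step, and essentially the only content beyond citing the proposition, is the monotonicity of the size. This is immediate from the closed formula in item \ref{i:6}, so there is no real obstacle here; the corollary is a direct consequence of the counting already established. I would phrase the argument as: by Proposition \ref{prop:F}, item \ref{i:6}, for each $k\geq 1$ there is a graph in $\mathcal{F}$ with $9(k+1)+1$ vertices, and since this number grows without bound, $\mathcal{F}$ must contain infinitely many pairwise non-isomorphic graphs. Therefore $\mathcal{F}$ is infinite.

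The one point requiring a line of care is the $k=0$ versus $k\geq 1$ split, since item \ref{i:6} is stated only for $k\geq 1$; but this is harmless, as the infinitude already follows from the $k\geq 1$ graphs alone, and $N_5$ can simply be regarded as the starting point. No additional machinery is needed, and the proof is expected to be a single short paragraph.
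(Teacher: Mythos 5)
Your proof is correct and takes essentially the same route as the paper: both deduce the corollary directly from Proposition \ref{prop:F}, producing graphs in $\F$ of unbounded order. The only point needing care is the one your parenthetical ``covered by the proposition'' glosses over --- item \emph{\ref{i:6}} is stated conditionally on a graph at level $k$ existing, so existence at every level requires the inductive observation (made explicit in the paper's proof) that by items \emph{\ref{i:4}} and \emph{\ref{i:6}} every graph in $\F$ has an almost endblock with the required 2-cutpoints, hence the procedure can always be applied once more.
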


\begin{proof}
By Proposition \ref{prop:F}, for every graph in $\F$ there is
always an almost endblock on which we can perform the procedure in
order to obtain a new graph in $\F$ with strictly more vertices.
\end{proof}

\begin{cor}\label{cor:minimal} Each graph in $\F$ is minimal, i.e., it does not contain another graph in $\F$ as induced subgraph. \end{cor}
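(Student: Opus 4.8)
The plan is to prove that no graph in $\F$ can occur as an induced subgraph of another graph in $\F$, by leveraging the precise structural census recorded in Proposition \ref{prop:F}. The overall strategy is a combined induction/minimal-counterexample argument: suppose for contradiction that $G$ and $G'$ are both in $\F$ with $G$ a proper induced subgraph of $G'$, and derive a contradiction from the rigidity of the block-cutpoint structure.

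First I would dispose of $N_5$ separately. Since $N_5$ is the unique graph in $\F$ on $10$ vertices while every other member has at least $9\cdot 2+1=19$ vertices by item \emph{\ref{i:6}}, the only candidate for containment involving $N_5$ is $N_5$ inside some larger $G'$. Here I would argue that $N_5$ contains a clique of size $5$ (the central clique $\{c_1,\dots,c_5\}$), whereas by item \emph{\ref{i:1}} every block of every other graph in $\F$ has size at most $4$; since any complete subgraph of a block graph lies inside a single block, $G'$ has no complete of size $5$, so $N_5$ cannot be an induced subgraph. This dispatches all cases touching $N_5$.

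Next I would treat two distinct graphs $G, G' \in \F\setminus\{N_5\}$ with $G$ an induced subgraph of $G'$. The key observation is that an induced subgraph $H$ of a block graph is again a block graph, and its blocks are obtained by intersecting $V(H)$ with the blocks of the ambient graph; moreover the endblock/internal-block classification is controlled by the degrees and cutpoint structure, which Proposition \ref{prop:F} pins down exactly. The natural approach is to track a single leaf of $G$ and follow a maximal path in the block-cutpoint tree $bc(G)$ back toward an almost endblock; because $G$ is itself produced by the procedure, every leaf of $G$ sits on an endblock of size $2$ attached by a $2$-cutpoint to an almost endblock of size $4$. I would show that this rigid $2$--$4$ attachment pattern, together with the alternation of block sizes $2,4,3,4,2,\dots$ dictated by items \emph{\ref{i:3}--\ref{i:5}}, forces any embedding $G\hookrightarrow G'$ to map blocks of $G$ onto equal-sized blocks of $G'$ and to be degree-preserving at cutpoints, so that the embedding is in fact an isomorphism onto a union of blocks that exhausts $G'$, contradicting properness.

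The main obstacle I anticipate is ruling out ``collapsing'' embeddings, where several vertices or small blocks of $G'$ conspire so that the induced subgraph on the image has a different block structure than one naively expects; in a block graph, deleting vertices can merge the apparent adjacency pattern only in limited ways, but one must verify carefully that no proper vertex subset of $G'$ induces a graph isomorphic to $G$. I expect the cleanest route is a counting argument: using item \emph{\ref{i:6}}, the number of vertices, leaves, $3$-cutpoints, and blocks of each size are strictly monotone in the parameter $k$, and each of these quantities is invariant under induced-subgraph isomorphism once we establish that the block sizes and cutpoint types are preserved. Matching these invariants forces $k=k'$ and hence $G=G'$, the desired contradiction, which establishes minimality.
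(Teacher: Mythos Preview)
Your handling of $N_5$ is clean and correct: the size-$5$ clique argument disposes of that case more elegantly than the paper, which does not separate $N_5$ out explicitly.

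However, the main argument has a genuine gap. You correctly identify that the crux is showing the embedding preserves block sizes and cutpoint types, and you explicitly flag this as ``the main obstacle'' --- but you never actually discharge it. Your proposed counting argument via item~\emph{\ref{i:6}} is circular: the invariants (number of leaves, $3$-cutpoints, blocks of each size) only match between $G$ and its image in $G'$ \emph{after} you know blocks map to equal-sized blocks and cutpoint types are preserved, which is precisely what you are trying to prove. Without that, there is no a priori reason the counts agree. Also, the phrase ``alternation of block sizes $2,4,3,4,2,\dots$'' suggests a path-like picture; in fact the block-cutpoint tree of a member of $\F$ branches (each almost endblock has three pendant endblocks and one $3$-cutpoint), so a simple alternation argument does not apply.

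The paper's proof attacks exactly the step you leave open, and does so directly rather than via counting. It observes that since the smaller graph (call it $G'$) is a proper connected induced subgraph of $G$, there must exist a block $B$ of $G$ with $B' := B \cap V(G')$ satisfying $0 < |B'| < |B|$. It then runs a finite case analysis on the pair $(\text{type of }B \text{ in }G,\ \text{type of }B' \text{ in }G')$, using items~\emph{\ref{i:1}}--\emph{\ref{i:5}} of Proposition~\ref{prop:F} to rule out each possibility: for instance, $B'$ cannot be an almost endblock of $G'$ (would need $4$ vertices but $|B'|<|B|\le 4$), and $B'$ cannot be an internal block of $G'$ (would need two $3$-cutpoints, but $B$ has at most one $3$-cutpoint and a $2$-cutpoint of $G$ cannot become a $3$-cutpoint in an induced subgraph). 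This forces $B'$ to be an endblock of $G'$, and a further short case analysis on the cutpoint of $B'$ shows $G'$ is an induced subgraph of $P_3$ or $N_4$, neither of which lies in $\F$ by item~\emph{\ref{i:6}}. Your plan is in the same spirit, but to make it a proof you need to carry out this case analysis rather than defer it to a counting step that presupposes its conclusion.
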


\begin{proof}
Let $G \in \F$ and let $G'$ be a proper connected induced subgraph
of $G$. The blocks of $G'$ are the blocks of $G$ intersected with
$V(G')$. Suppose $G' \in \F$, and suppose $B'$ is a block of $G'$
such that $B' = B \cap V(G')$, with $B$ a block of $G$, and $|B'|
< |B|$. Then, $B$ cannot be an endblock of $G$ because, by
Proposition \ref{prop:F}.\emph{\ref{i:3}}, endblocks of $G$ have
size 2 and $|B'| < |B|$; $B'$ cannot be an almost endblock of $G'$
because by Proposition \ref{prop:F}.\emph{\ref{i:1}} $B$ has at
most 4 vertices, and by item \emph{\ref{i:4}} $B'$ should have 4
vertices; $B'$ cannot be an internal block of $G'$ because, in
that case, by Proposition \ref{prop:F} and the cardinalities of
each type of block, $B$ should be an almost endblock but, by item
\emph{\ref{i:5}}, $B'$ should have two 3-cutpoints while $B$ has
only one 3-cutpoint, and no 2-cutpoint of $G$ may become a
3-cutpoint in an induced subgraph of it. So, $B'$ is an endblock
and $B$ is either an almost endblock or an internal block. Let $x$
be the cutpoint of $B'$ in $G'$. By Proposition
\ref{prop:F}.\emph{\ref{i:3}}, $x$ is a 2-cutpoint of $G'$. If $x$
is a 2-cutpoint in $G$, as $B$ is not an endblock, we have that
$G' = P_3$, and it does not belong to $\F$ (by Proposition
\ref{prop:F}.\emph{\ref{i:6}}). If $x$ is a 3-cutpoint in $G$, let
$B_1$ and $B_2$ be the other two blocks in $G$ that contain $x$.
Since $x$ is a 2-cutpoint in $G'$, the intersection of one of
these blocks with $V(G')$ is $\{x\}$. Without loss of generality,
suppose this is the case of $B_2$. If $B_1$ is an almost endblock
in $G$, then $G'$ is an induced subgraph of the thin spider $N_4$,
that is not in $\F$ (by Proposition
\ref{prop:F}.\emph{\ref{i:6}}). If $B_1$ is an internal block, by
cardinality, it may be either an endblock or an internal block in
$G'$. In the first case, $G' = P_3$, that is not in $\F$. The
second case cannot arise, because $B_1$ cannot have two
3-cutpoints in $G'$ (no 2-cutpoint of $G$ may become a 3-cutpoint
in an induced subgraph of it).
\end{proof}

We will prove now some properties about the $B_0$-VPG
representations of block graphs.

\begin{lem}\label{l:cross} If a clique $K$ of a block graph $G$ has 3 cutpoints, then,
in a $B_0$-VPG representation of $G$, it has to be represented as
a cross clique. Similarly, if the clique $K$ has 4
cutpoints.\end{lem}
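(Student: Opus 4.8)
The plan is to argue by contradiction: suppose $K$ is a clique with at least $3$ cutpoints that is represented as a \emph{line clique} in some $B_0$-VPG representation of $G$. The goal is to derive a forbidden intersection among the paths attached at the cutpoints. Recall that in a line clique all the paths representing the vertices of $K$ share a common row (or a common column), say row $y_0$, and intersect on at least one grid point of that row. The key structural fact I would exploit is that each cutpoint $c_i$ of $K$ belongs to a further block $B_i\neq K$, and $B_i$ contains at least one vertex $w_i$ that is adjacent to $c_i$ but \emph{not} adjacent to any other vertex of $K$ (since distinct blocks share at most the single cutpoint, and in a block graph a vertex outside $K$ cannot be adjacent to two vertices of $K$ without creating a diamond or enlarging the block).

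First I would set up coordinates: writing the three cutpoints as $c_1,c_2,c_3$, each $P_{c_i}$ is a horizontal segment lying on row $y_0$, and the three segments pairwise overlap, so by the Helly property on a line they share a common sub-interval $I$ of row $y_0$. Now consider the path $P_{w_i}$ for a private neighbour $w_i$ of $c_i$. Since $w_i$ is adjacent to $c_i$, $P_{w_i}$ must meet $P_{c_i}$; but since $w_i$ is nonadjacent to $c_j$ for $j\neq i$, $P_{w_i}$ must avoid $P_{c_j}$. The heart of the argument is to show that the portion of $P_{c_i}$ that lies \emph{outside} the other two cutpoint-paths is too constrained to host such a private attachment for all three indices simultaneously. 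Concretely, on the line $y_0$ the three intervals $P_{c_1},P_{c_2},P_{c_3}$ are linearly ordered by their endpoints, and at most two of them can have an endpoint sticking out to the left of the common region and at most two to the right; the middle interval is covered by the union of the other two except possibly at grid points already shared, leaving it with no free cell on row $y_0$ to which a private horizontal neighbour could attach without also touching one of the flanking cutpoint-paths. A private neighbour could instead attach via a vertical path crossing $P_{c_i}$, but such a vertical path, crossing row $y_0$ at a cell interior to the overlap region $I$, would necessarily also cross $P_{c_j}$ there, contradicting nonadjacency. I would make this precise by a short case analysis on which of the three intervals is the ``middle'' one.

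The main obstacle I anticipate is handling the private-neighbour attachment cleanly, because a neighbour $w_i$ need not attach by a vertical path — it could be part of $B_i$ represented as its own line or cross clique, so $P_{w_i}$ could run horizontally along $y_0$ as well. I would deal with this by focusing only on the single grid cell (or the minimal set of cells) where $P_{w_i}$ meets $P_{c_i}$: that meeting cell lies on row $y_0$ if $P_{w_i}$ is horizontal, and then it must lie strictly outside both $P_{c_j}$, $j\neq i$; if $P_{w_i}$ is vertical it meets $y_0$ in exactly one cell, again forced outside the two other intervals. In either case each $c_i$ needs a grid point of $P_{c_i}\cap\{y=y_0\}$ that is free of the other two cutpoint-intervals, and the linear-order argument shows three such mutually exclusive ``private'' points cannot coexist with a common shared point. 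This yields the contradiction, so $K$ must be a cross clique.

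Finally, for the case of $4$ cutpoints I would simply note that a clique with $4$ cutpoints contains, as a sub-clique on any three of those cutpoints together with the same private neighbours, the configuration just ruled out for line cliques; since a line-clique representation of $K$ restricts to a line-clique representation of each such triple, the same contradiction applies, and $K$ must be represented as a cross clique. I expect the write-up to reduce the $4$-cutpoint case to the $3$-cutpoint case in one or two sentences, so the real content is the counting/ordering argument on row $y_0$ described above.
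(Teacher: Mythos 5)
Your proposal is correct and follows essentially the same route as the paper's proof: assume a line-clique representation, attach a private neighbour to each cutpoint (which exists by the block structure), and observe that the cutpoint path that is neither farthest East nor farthest West is covered by the union of the two extreme ones, so its private neighbour's path cannot avoid the other cutpoints' paths. Your write-up is in fact somewhat more careful than the paper's (explicit Helly/ordering argument, the horizontal-versus-vertical attachment cases, and the reduction of the 4-cutpoint case to the 3-cutpoint case), but the underlying argument is identical.
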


\begin{proof} Let $v_i$, $1\leq i\leq 3$, be the cutpoints of $K$. Since $v_i$, $1\leq i\leq 3$, are cutpoints there exist
vertices $x_j$, $1\leq j\leq 3$, such that $v_i$ is adjacent to
$x_j$ if and only if $i=j$. Suppose that the clique $K$ is
represented as a line clique. So, all the paths which represent
vertices of $K$ are horizontal (respectively vertical) paths using
a common row (respectively column) of the grid. Suppose that
$P_{v_1}$ is the farthest line in the East direction (by
\textit{farthest line} in some direction, in the context of a
clique whose paths intersect at point $p$ of the grid, we mean the
path belonging to the clique and such that one of its endpoints
maximizes the distance to $p$ in that direction) and $P_{v_2}$ is
the farthest lines using the West. But, $P_{v_3}$ is an horizontal
(respectively vertical) path lying in the same row (respectively
column) that $P_{v_1}$ and $P_{v_2}$ and it has to be adjacent to
$P_{x_3}$, and $P_{x_3}$ is not adjacent with $P_{v_1}$ and
$P_{v_3}$. So, it is impossible to represent $P_{x_3}$.\

In a similar way, it is easy to see that the results follows if
$K$ has 4 cutpoints.
\end{proof}

\begin{lem}\label{lem:cardinals}
If a clique $K$ of a block graph $G$ has 4 cutpoints, then, then,
in a $B_0$-VPG representation of $G$, the 4 cutpoints are
represented as the farthest lines South, North, West and East,
respectively. Similarly, if a clique $K$ has 3 cutpoints, then
they are represented as the farthest lines of three different
cardinal points.
\end{lem}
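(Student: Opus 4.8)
The plan is to build on Lemma \ref{l:cross}, which already guarantees that a clique $K$ with $3$ or $4$ cutpoints must be represented as a cross clique. Recall that a cross clique has a center grid point $(x,y)$ through which all paths of $K$ pass, with at least one path using the column $x$ and at least one using the row $y$. The key observation is that a horizontal path through the center occupies either the East or the West direction of the cross (relative to the center), while a vertical path occupies either the North or the South direction. So each cutpoint's path, being one of these four oriented lines emanating from the center, is naturally associated with one of the four cardinal directions, and the claim is that distinct cutpoints must realize distinct directions.

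First I would set up notation as in Lemma \ref{l:cross}: for each cutpoint $v_i$ of $K$ there is a private neighbor $x_i$ (a vertex in a different block from the others) with $P_{x_i}$ adjacent to $P_{v_i}$ but to no other $P_{v_j}$. The heart of the argument is to show that two cutpoints cannot share the same cardinal direction. Suppose, for contradiction, that two cutpoints $v_1$ and $v_2$ both extend East from the center (the other cases being symmetric). Then $P_{v_1}$ and $P_{v_2}$ are collinear horizontal segments sharing the center, so one is contained in the grid-row interval spanned by the other past the center; say $P_{v_2}$ reaches at least as far East as $P_{v_1}$, so every grid point of $P_{v_1}$ to the East of the center is also a grid point of $P_{v_2}$. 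Now consider $P_{x_1}$: it must meet $P_{v_1}$ but must avoid $P_{v_2}$. Since $P_{v_1}\cap P_{v_2}$ contains the center together with the entire overlapping Eastern portion, any grid point where $P_{x_1}$ could touch $P_{v_1}$ also lies on $P_{v_2}$, forcing $P_{x_1}$ to be adjacent to $P_{v_2}$ as well — contradicting that $x_1$ is a private neighbor of $v_1$. This is essentially the same collinearity obstruction used in the proof of Lemma \ref{l:cross}, now applied within the cross representation.

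Having ruled out repeated directions, the counting finishes the proof. In the $4$-cutpoint case, four distinct cutpoints must occupy four distinct cardinal directions among $\{$North, South, East, West$\}$, so by pigeonhole each direction is used exactly once, which is precisely the claimed assignment of the four cutpoints to the four farthest lines South, North, West and East. In the $3$-cutpoint case, the three cutpoints occupy three distinct directions, i.e.\ they are the farthest lines of three different cardinal points, as claimed. I would also note that in a cross clique at least one horizontal and one vertical direction is actually used, which is consistent with (and in the $4$-cutpoint case forces) both a horizontal and a vertical pair being present.

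The main obstacle I anticipate is making the ``same direction forces a forced intersection'' step fully rigorous, in particular handling the degenerate configurations: a path that is a single grid point, paths whose Eastern extents are equal, and the precise meaning of ``farthest line'' when two collinear paths tie. The cleanest route is to argue purely in terms of grid-point containment — if $P_{v_1}$ and $P_{v_2}$ share the center and the same direction, then the shorter one's directional portion is a subset of the longer one's, so the private neighbor cannot distinguish them. Once this containment claim is stated carefully, the remainder is pigeonhole bookkeeping and the symmetry across the four directions, which requires no further ideas.
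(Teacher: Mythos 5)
Your overall strategy (reduce to a cross clique via Lemma~\ref{l:cross}, use the private neighbors $x_i$, then pigeonhole over the four cardinal directions) is the same one the paper intends --- its proof explicitly says to reuse the idea of Lemma~\ref{l:cross} --- but the step you use to get injectivity is false as stated. You treat each path of the cross clique as a ray emanating from the center, so that each cutpoint ``occupies'' exactly one cardinal direction, and you claim two cutpoints cannot both extend East. Neither holds: the center need not be an endpoint of a path, so a horizontal path of the clique typically extends on \emph{both} sides of the center. Concretely, put the center at the origin and let $P_{v_1}$ run from $(-2,0)$ to $(1,0)$, $P_{v_2}$ from $(-1,0)$ to $(2,0)$, $P_{v_3}$ from $(0,-2)$ to $(0,1)$, $P_{v_4}$ from $(0,-1)$ to $(0,2)$, with the private neighbors attached at $(-2,0)$, $(2,0)$, $(0,-2)$, $(0,2)$. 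Here $v_1$ and $v_2$ both ``extend East from the center,'' yet this is a perfectly valid $B_0$-VPG representation, so no contradiction can exist; your key sentence --- ``any grid point where $P_{x_1}$ could touch $P_{v_1}$ also lies on $P_{v_2}$'' --- fails because $P_{x_1}$ may meet $P_{v_1}$ on its Western portion. Your closing remark about handling ties by grid-point containment has the same blind spot: it again only inspects the shared direction and ignores the other side of the path.

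The statement that is actually true, and that makes your pigeonhole count work, concerns being \emph{strictly farthest}, not which directions a path extends in. Since $x_i$ is adjacent, among the vertices of $K$, only to $v_i$, the path $P_{x_i}$ must meet $P_{v_i}$ at a grid point $q$ lying on no other path of $K$. If $P_{v_i}$ is horizontal and $q$ lies East of the center, then any horizontal path of $K$ reaching East at least as far as $q$ would contain $q$ (all horizontal paths lie in the center's row and contain the center), so $P_{v_i}$ reaches strictly farther East than every other path of $K$; symmetrically for West, North and South. Hence each cutpoint's path is strictly farthest in at least one cardinal direction, and since at most one path can be strictly farthest in any given direction, the induced assignment of cutpoints to directions is injective; with $4$ (resp.\ $3$) cutpoints this forces $4$ (resp.\ $3$) distinct cardinal directions, which is exactly the lemma. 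Replacing your ``same direction'' argument by this ``private point forces strict farthestness'' argument repairs the proof, and it is precisely ``the same idea as in the proof of Lemma~\ref{l:cross}'' that the paper's own (terse) proof invokes.
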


\begin{proof}
Suppose that $K$ has 4 cutpoints. By Lemma \ref{l:cross}, $K$ has
to be represented as a cross clique. Using the same idea that in
the proof of Lemma \ref{l:cross}, it is easy to see that the 4
cutpoints are represented as the farthest lines South, North, West
and East, respectively.

In a similar way, it is easy to see that the results follows if
$K$ has 3 cutpoints.
\end{proof}

\begin{lem}\label{l:orden}
In any $B_0$-VPG representation of the graph $W$, given in Figure
\ref{f:W}, the intersection points of cliques $C_1$, $K$, and
$C_2$ lie in a same line of the grid, and the intersection point
of clique $K$ lies between the intersection points of cliques
$C_1$ and $C_2$.
\end{lem}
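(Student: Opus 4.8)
The plan is to exploit the fact that, in $W$, each of the three cliques $C_1$, $K$, and $C_2$ carries enough cutpoints to be forced into a rigid geometric shape. First I would observe that, by the structure of $W$, the cliques $C_1$ and $C_2$ each have at least three cutpoints and $K$ has at least three cutpoints as well; hence, by Lemma~\ref{l:cross}, all three must be represented as cross cliques. Denote by $p_1$, $p_K$, and $p_2$ their centers, which are precisely their intersection points, and let $a$ be the cutpoint shared by $C_1$ and $K$ and $b$ the cutpoint shared by $K$ and $C_2$. By Lemma~\ref{lem:cardinals}, inside each clique the cutpoints occupy the farthest lines in distinct cardinal directions, so $P_a$ is the farthest line of $C_1$ in one direction and the farthest line of $K$ in another, and similarly for $P_b$.

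Second, I would establish the collinearity of consecutive centers. Since $a$ belongs to both $C_1$ and $K$, the single horizontal-or-vertical path $P_a$ must pass through both $p_1$ and $p_K$; as $P_a$ is a straight segment, $p_1$ and $p_K$ lie on a common row or column, say a line $\ell_1$. Tracing the direction in which $P_a$ leaves $p_1$ toward $p_K$ and the direction in which it leaves $p_K$ toward $p_1$ shows that $a$ is the farthest line of $C_1$ and of $K$ in two opposite cardinal senses along $\ell_1$. The same reasoning applied to $b$ places $p_K$ and $p_2$ on a common line $\ell_2$, with $b$ the farthest line of $K$ and of $C_2$ in opposite senses along $\ell_2$.

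The main obstacle is to upgrade these two separate collinearities into a single line with $p_K$ strictly between $p_1$ and $p_2$; equivalently, to prove that $\ell_1=\ell_2$ and that $a$ and $b$ emanate from $p_K$ in \emph{opposite} directions rather than in perpendicular ones. I would handle this by a case analysis on the cardinal directions assigned to $a$ and $b$ within $K$ by Lemma~\ref{lem:cardinals}. If $a$ and $b$ pointed in perpendicular directions from $p_K$, then the remaining cutpoints and pendant vertices of $K$, $C_1$, and $C_2$ would be squeezed into the two leftover quadrants, and I expect to reach a contradiction exactly as in the proof of Lemma~\ref{l:cross}, by exhibiting a pendant path attached to one of the cliques that cannot meet its own cutpoint's path without also meeting a nonadjacent path. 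Ruling out every perpendicular assignment forces $a$ and $b$ to be the two opposite farthest lines of $K$, whence $\ell_1=\ell_2$ and $p_K$ lies between $p_1$ and $p_2$ on this common line, which is the assertion of the lemma.

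The delicate point throughout will be bookkeeping the cardinal labels consistently across the three cross cliques, so that the perpendicular cases are genuinely excluded rather than merely relabeled, and verifying that in each excluded case the offending pendant path really has no admissible placement.
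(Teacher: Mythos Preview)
Your plan rests on a structural reading of $W$ that does not match the figure: you posit two distinct ``bridge'' cutpoints, $a\in C_1\cap K$ and $b\in K\cap C_2$, and then build the whole argument around the relative directions of $P_a$ and $P_b$ at the center of $K$. In $W$, however, the three cliques $C_1$, $K$, $C_2$ share a \emph{single} vertex $v$ (this is why, in the proof of Lemma~\ref{lem:F-not-V}, the authors can say that the three centers ``clearly'' lie on a common grid line: they all lie on $P_v$). With $a=b=v$, your collinearity step becomes trivially true, but the entire case analysis on ``perpendicular versus opposite directions of $a$ and $b$ at $p_K$'' evaporates, and you are left with no mechanism to force betweenness. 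Relatedly, the assertion that $K$ has at least three cutpoints is not justified and is not what drives the argument.

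The paper's proof proceeds differently. Collinearity is immediate from the common vertex $v$. For betweenness one supposes, say, that $x_2$ lies to the left of both $x_1$ and $x_3$ on the line of $P_v$, and uses that $C_1$ and $C_2$ each have \emph{four} cutpoints: by Lemma~\ref{lem:cardinals} these cutpoints occupy all four cardinal extremes of their cross cliques. Since $P_v$ already extends through $x_1$ in both horizontal directions (towards $x_2$ on one side and $x_3$ on the other), the remaining cutpoints and their pendants are forced into positions that make the designated vertex $w$ of $W$ impossible to draw. The contradiction is obtained from this one concrete vertex, not from a directional case split at $K$. To repair your outline, you should first identify the common vertex $v$ and the special vertex $w$ in $W$, upgrade the cutpoint count of $C_1$ and $C_2$ from three to four, and then argue directly why $w$ (and its pendant) cannot be placed when $x_2$ is extremal along $P_v$.
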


\begin{proof}
Let $x_1$, $x_2$, $x_3$ be the intersection points in the grid of
the cliques $C_1$, $K$, $C_2$,  respectively. Suppose, by the
contrary, that there is a $B_0$-VPG representation of the graph
$W$ such that $x_2$ does not lie between $x_1$ and $x_3$. Without
loose of generality, we can assume that $x_2$ is to the left of
$x_1$ and $x_3$. By Lemmas \ref{lem:cardinals} and \ref{l:cross},
since $C_1$ and $C_2$ have 4 cutpoints, they are represented as
cross cliques where the 4 cutpoints are the farthest lines South,
North, West and East, respectively. So, it is impossible to
represent the vertex $w$ of $W$.
\end{proof}

\begin{remark}\label{r:1} Observe that all the graphs of $\F - \{N_5\}$ have $W$ as
induced subgraph.\end{remark}

\begin{figure}[h]
\centering{
\includegraphics[height=1.5in]{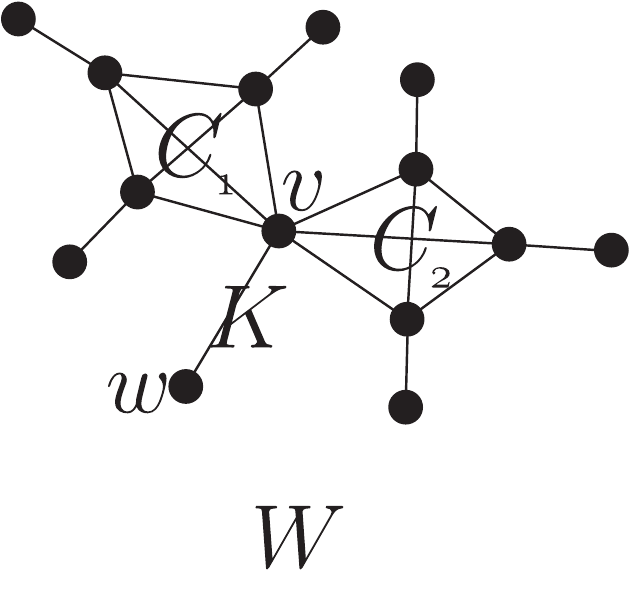}
\caption{The graph $W$.}\label{f:W}}
\end{figure}

\begin{lem}\label{lem:F-not-V}
The graphs of $\F$ are not \VO.
\end{lem}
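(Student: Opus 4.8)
The plan is to prove that no graph in $\F$ admits a $B_0$-VPG representation, arguing by contradiction. I would split into the two cases dictated by the structure of $\F$: the base case $N_5$, and the graphs $\F - \{N_5\}$ obtained by the procedure.

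For $N_5$ itself, recall that $N_5$ is the thin spider with clique $K = \{c_1,\dots,c_5\}$ and pendant vertices $s_1,\dots,s_5$, where $s_i$ is a private neighbor of $c_i$. The key observation is that each $c_i$ is a cutpoint of $N_5$, so $K$ is a clique with five cutpoints. The idea in Lemmas \ref{l:cross} and \ref{lem:cardinals} is exactly that a clique with $3$ (or $4$) cutpoints in a block graph forces the corresponding paths to occupy distinct ``extreme'' lines of a cross clique (South, North, West, East), since a line clique only exposes two free directions and a cross clique only exposes four. Thus a clique with five cutpoints is impossible to represent, because there are only four farthest directions available for five private neighbors $s_i$ to attach; by the pigeonhole argument underlying those lemmas one of the $s_i$ cannot be drawn. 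This handles $N_5$.

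For the graphs of $\F - \{N_5\}$, I would invoke Remark \ref{r:1}: every such graph contains the graph $W$ of Figure \ref{f:W} as an induced subgraph, and $B_0$-VPG is a hereditary property (a $B_0$-VPG representation of $G$ restricts to one of any induced subgraph). So it suffices to show $W$ itself is not $B_0$-VPG. Here I would use Lemma \ref{l:orden}: in any $B_0$-VPG representation of $W$, the intersection points of $C_1$, $K$, $C_2$ are collinear with $K$'s point strictly between those of $C_1$ and $C_2$. I would then combine this forced linear order with the cross-clique constraints from Lemmas \ref{l:cross} and \ref{lem:cardinals} applied to $C_1$ and $C_2$ (which have four cutpoints each, hence are cross cliques whose four extreme directions are all occupied). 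The betweenness forces $K$'s center to sit on the line joining the two cross centers, and the remaining private neighbor(s) attached to $K$ then collide with the already-saturated extreme lines of the two flanking cross cliques, yielding a contradiction.

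The main obstacle will be the case analysis hidden inside applying Lemma \ref{l:orden} together with the cross-clique structure: once the three centers are collinear, I must carefully track which of the four cardinal directions of each cross clique is consumed by the paths shared with $K$ and verify that no consistent placement of the remaining pendant path exists. I expect this bookkeeping, rather than any single conceptual leap, to be the delicate part; everything else reduces cleanly to the already-established lemmas and to the hereditary nature of the $B_0$-VPG class.
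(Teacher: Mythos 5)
Your handling of $N_5$ is fine in spirit (a clique with five cutpoints cannot be represented, by the same farthest-line argument behind Lemmas \ref{l:cross} and \ref{lem:cardinals}; the paper simply cites \cite{golumbic} for this). But your argument for $\F - \{N_5\}$ has a genuine gap: you reduce the whole problem to showing that $W$ itself is not \VO, and $W$ \emph{is} a \VO\ graph. Lemma \ref{l:orden} is not an impossibility statement about $W$; it is a structural constraint on the representations that $W$ \emph{does} admit: the intersection points of $C_1$, $K$, $C_2$ must be collinear with $K$'s point in the middle. One can easily draw such a representation: place the cross cliques $C_1$ and $C_2$ at the two endpoints of the horizontal path of their common vertex, and realize $K$ as a cross clique at an interior point of that path, with the remaining vertices of $K$ on vertical legs there. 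Indeed, $W$ is a block graph containing no member of $\F$ as induced subgraph, so Theorem \ref{t:main} itself confirms that $W$ is \VO. Consequently no amount of bookkeeping about ``saturated extreme lines'' can produce the contradiction you are hoping for, and the hereditary argument collapses: containing $W$ as an induced subgraph is not, by itself, an obstruction.

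What the paper does instead---and what your proposal is missing---is an induction on the number $k$ of applications of the procedure, in which Lemma \ref{l:orden} is used \emph{constructively} rather than as a refutation. Given a hypothetical \VO\ representation of a graph $G \in \F$ built with $k+1$ applications, one locates the copy of $W$, uses Lemma \ref{l:orden} to conclude that the intersection points $x_1, x_2, x_3$ of $C_1$, $K$, $C_2$ lie on one grid line with $x_2$ between $x_1$ and $x_3$, deletes the paths of $C_1$, $C_2$ and their corresponding endblocks, and inserts four new paths: one covering $\{x_1,x_2\}$, one covering $\{x_2,x_3\}$, and two single-point paths at $x_1$ and $x_3$. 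The betweenness is exactly what guarantees these new paths create the required adjacencies and no others. The result is a \VO\ representation of a member of $\F$ obtained with only $k$ applications, contradicting the inductive hypothesis. Without this descent step (or some substitute for it), your proof cannot go through.
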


\begin{proof}
The graph $N_5$ is not \VO  \ \cite{golumbic}. We will proceed by
induction on the number of applications of the procedure in the
construction of the graph from $N_5$. Assume that if we applied
the procedure $k$ times, then we obtain a graph of $\F$ which is
not \VO.

Let $G$ be a graph of $\F$ which is obtained applying the
procedure $k+1$ times. Suppose, on the contrary, that $G\in
B_0$-VPG. We take a $B_0$-VPG representation of $G$.

By Remark \ref{r:1}, $G$ has $W$ as induced subgraph. Let $v$ be
the vertex of $W$ as in Figure \ref{f:W}, let $P_v$ be the path
which represents $v$ in the $B_0$-VPG representation of $G$ that
we took. Let $x_1$, $x_2$, $x_3$ be the intersection points  in
the grid of the cliques $C_1$, $K$, $C_2$, respectively. Clearly,
the three vertices lie in a same line of the grid and, by Lemma
\ref{l:orden}, $x_2$ lies between $x_1$ and $x_3$.

We are going to construct a new $B_0$-VPG representation. This is
obtained of the previous one by removing the paths which
correspond to $C_1$, $C_2$ and their corresponding endblocks; and
adding the paths $P_{v_i}$, with $1\leq i\leq 4$, such that
$V(P_{v_1})=\{x_1,x_2\}$, $V(P_{v_2})=\{x_2,x_3\}$,
$V(P_{v_3})=\{x_1\}$ and $V(P_{v_4})=\{x_3\}$. Observe that this
is a $B_0$-VPG representation of a graph of $\F$ that was obtained
applying the procedure $k$ times, which is a contradiction.

Hence, the graphs of $\F$ are not \VO.\end{proof}

We have proved the following theorem:

\begin{thm}\label{t:main}
Let $G$ be a block $VPG$  graph. Then $G$ is \VO \ if and only if
$G$ is $\F$-free. Moreover, the graphs of $\F$ are minimal not
\VO.
\end{thm}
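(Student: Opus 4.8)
The plan is to prove the theorem in two directions, most of which has already been assembled from the preceding lemmas.

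The plan is to prove the stated equivalence in two directions and then read off the minimality assertion from the lemmas already in place. The forward direction is immediate: the $B_0$-VPG property is hereditary under taking induced subgraphs, since one simply restricts a given representation to the paths of the subgraph. Hence, if the block graph $G$ were $B_0$-VPG and contained some $H\in\F$ as an induced subgraph, then $H$ would itself be $B_0$-VPG, contradicting Lemma~\ref{lem:F-not-V}. Therefore every $B_0$-VPG block graph is $\F$-free, and the only real work lies in the converse.

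For the converse I would argue constructively, building a $B_0$-VPG representation of an arbitrary $\F$-free block graph $G$ by induction along the block-cutpoint-tree $bc(G)$, rooted at an endblock and processed from the leaves inward. The structural lemmas pin down the available freedom: by Lemma~\ref{l:cross} a block with three or four cutpoints must be drawn as a cross clique, and by Lemma~\ref{lem:cardinals} its cutpoints must occupy the farthest positions in three, respectively four, distinct cardinal directions, while a block with at most two cutpoints may be drawn as a line clique using only its two extreme (East/West) ends. The first point to verify is that $\F$-freeness, and in particular $N_5$-freeness, forces every block of $G$ to have at most four cutpoints: a block with five cutpoints yields an induced $N_5$ once one selects, for each cutpoint, a private neighbour in one of its other blocks. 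This guarantees that each block can be realized locally, and the induction step then attaches the representation of a pending block at the shared cutpoint, reusing one still-free extreme direction of the parent clique and leaving the others available. The invariant to maintain is that at each cross clique the directions claimed by its cutpoints are pairwise distinct and the direction pointing toward the parent is never reclaimed by a child.

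The main obstacle is global rather than local. Along a path of $bc(G)$ that threads through several cross cliques, one must simultaneously satisfy the colinearity-and-betweenness requirement of Lemma~\ref{l:orden}, so that the centres of consecutive cross cliques lie on a common line in a consistent order. I expect the heart of the argument to be the proof that $\F$-freeness is \emph{exactly} the condition making all of these ordering constraints jointly satisfiable: whenever two betweenness requirements would conflict, one should be able to exhibit an induced copy of $W$ (Figure~\ref{f:W}) and then, building outward through the blocks as in the definition of $\F$, an induced member of $\F$, contradicting the hypothesis. In effect this reverses the reduction used in Lemma~\ref{lem:F-not-V}, and it is precisely where the bookkeeping of free directions and of the linear order of cross-clique centres must be made rigorous. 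The representation produced in this way is the source of the promised certifying recognition-and-representation algorithm.

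Finally, minimality follows by combining the two directions with Corollary~\ref{cor:minimal}. Fix $H\in\F$; it is not $B_0$-VPG by Lemma~\ref{lem:F-not-V}. Let $H'$ be any proper induced subgraph of $H$. If $H'$ is connected, then, being an induced subgraph of a chordal diamond-free graph, it is again a block graph, and by Corollary~\ref{cor:minimal} it contains no member of $\F$; thus $H'$ is $\F$-free and, by the converse direction, $B_0$-VPG. If $H'$ is disconnected, each of its connected components is a proper connected induced subgraph of $H$, hence $\F$-free and $B_0$-VPG by the same reasoning, and a disjoint union of $B_0$-VPG graphs is $B_0$-VPG because the component representations can be placed in far-apart regions of the grid. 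In either case every proper induced subgraph of $H$ is $B_0$-VPG, so the graphs of $\F$ are minimal not $B_0$-VPG.
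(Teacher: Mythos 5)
Your forward direction (heredity of \VO\ plus Lemma~\ref{lem:F-not-V}) and your minimality argument (Lemma~\ref{lem:F-not-V} for the graph itself, Corollary~\ref{cor:minimal} plus the converse direction for its proper induced subgraphs) are both correct and match the paper. The genuine gap is in the converse, which is the heart of the theorem: you lay out a plan and then explicitly defer the decisive step (``I expect the heart of the argument to be the proof that $\F$-freeness is exactly the condition making all of these ordering constraints jointly satisfiable''), but you never supply a mechanism that carries it out. The paper's proof does not derive satisfiability of betweenness constraints from $W$ at all (Lemma~\ref{l:orden} and $W$ are used only in the \emph{negative} direction, inside Lemma~\ref{lem:F-not-V}); instead it introduces a concrete combinatorial device: cutpoints are labeled $A$ or $B$ in reverse BFS order of $bc(G)$, where label $B$ flags a cutpoint that will later have to host two ``heavy'' blocks, and a Claim is proved (by induction on the number of $B$-labeled cutpoints, with a nontrivial graph surgery to lift forbidden subgraphs back) that $\F$-freeness forces conditions $(i)$--$(iv)$ limiting how labels can sit on a block. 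Nothing in your sketch plays this role, and without it the induction cannot close.

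Moreover, the invariant you propose is too weak to make even a correct plan go through. You reserve, per cutpoint, ``one still-free extreme direction'' of the parent clique; but a cutpoint $c$ may belong to two blocks (other than $H^c$) that each have four cutpoints, and by Lemmas~\ref{l:cross} and~\ref{lem:cardinals} each of those blocks needs $c$ to be a farthest line of its own cross representation, so $c$ must be drawn with \emph{two opposite} free segments (farthest North and South, or East and West, simultaneously) in the representation of $H^c$. This is exactly what the paper's label $B$ encodes, and it must be anticipated when $H^c$ is drawn -- before those blocks are processed -- which is why the labeling is computed bottom-up while the representation is built top-down along the BFS order. Your leaf-to-root construction also runs into the separate problem of merging independently built grid representations of several subtrees at a common parent block, which you do not address. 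In short: the easy direction and minimality are fine, but the existence half of the characterization remains unproven in your proposal.
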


\begin{proof}
The only if part follows from Lemma \ref{lem:F-not-V}. For the if
part, let $G$ be a block $\F$-free graph. Let $s$ be a BFS
ordering of the vertices of the block-cutpoint-tree $bc(G)$, in
such a way that $s_1$ is a vertex of $bc(G)$ corresponding to a
block of $G$. Let $H_i$ be the $i$-th block in $s$. We will
consider the graph $G_i$ as the graph induced by the first $i$
blocks $H_1,\dots, H_i$ in $s$, and proceed by induction on $i$.
Notice that the graph $G_i$ is connected and that $H_i$ is an
endblock of $G_i$; moreover, by the BFS algorithm, if $i>1$, there
is only one cutpoint $c$ of $G$ belonging to $H_i$ and appearing
in $s$ before $H_i$. We will denote that cutpoint as $c(i)$.
Notice that $c(i)$ is a cutpoint of $G_i$. All the blocks between
$c(i)$ and $H_i$ containing $c(i)$ are endblocks of $G_i$ and are
consecutive in $s$. For each such block $H_j$, it holds
$c(j)=c(i)$. \

For each cutpoint $c$ of $G$, there is only one block containing
$c$ and appearing before $c$ in $s$. We will denote that block by
$H^c$.

We will label the cutpoints of $G$ as $A$ or $B$, according to
some rules, in decreasing order with respect to $s$. As $s$ was
obtained by a BFS of $bc(G)$, by the moment of labeling the
cutpoint $c$, all the other cutpoints of the blocks containing $c$
and different from $H^c$ are already labeled. The cutpoint $c$
will be labeled $B$ if it belongs to at least two blocks,
different from $H^c$, such that each of them either has at least
four cutpoints or has exactly three cutpoints and one of them is
already labeled $B$. The cutpoint $c$ will be labeled $A$
otherwise.

We will show by induction on $i$ that we can find a \VO \
representation of $G_i$ such that if $c$ is a cutpoint of $G$ that
is a vertex of $G_i$, then it corresponds to the farthest North,
South, East or West line of the line or cross representation of
the clique $H^c$ and, moreover, if $c$ is labeled $B$, then it
corresponds to the farthest North and South, or East and West
(simultaneously) line of the line or cross representation of the
clique $H^c$.

\medskip

\textbf{Claim.} Since $G$ is $\F$-free, the following conditions
hold: $(i)$ no block of $G$ has five (or more) cutpoints; $(ii)$ a
cutpoint $c$ labeled $B$ belongs to exactly two blocks, different
from $H^c$, such that each of them either has at least four
cutpoints or has exactly three cutpoints and one of them
(different from $c$) is labeled $B$; $(iii)$ if a cutpoint $c$ is
labeled $B$, then $H^c$ has at most three cutpoints; and $(iv)$ no
block of $G$ having at least three cutpoints is $H^{c_1}$ and
$H^{c_2}$ for two cutpoints $c_1$ and $c_2$ labeled $B$.

\textit{Proof of the claim.} Condition $(i)$ holds since $G$ is
$N_5$-free. Let us assume from now on that $(i)$ is satisfied.

Suppose by contradiction that one of conditions $(ii)$, $(iii)$ or
$(iv)$ does not hold. We will prove, by induction in the number of
cutpoints labeled $B$ on $bc(G)$, that $G$ contains a member of
$\F$ as an induced subgraph.

If there is only one vertex $v$ labeled $B$, then the conditions
that should fail are $(ii)$ or $(iii)$. By the labeling rules and
since $v$ is the only vertex labeled $B$, it belongs to at least
two blocks, different from $H^v$, such that each of them has four
cutpoints. Either if the number of such blocks is at least three
or if $H^v$ has four cutpoints, then $G$ contains the second graph
in Figure \ref{f:BLOCK} as induced subgraph.

Suppose that the number of vertices labeled $B$ is greater than
one, and let $v$ be the first vertex labeled $B$ in the BFS
sequence $s$ (i.e., the one with higher index in $s$).

By the labeling rules and since $v$ is the first vertex labeled
$B$, it belongs to at least two blocks, different from $H^v$, such
that each of them has four cutpoints. Either if the number of such
blocks is at least three or if $H^v$ has four cutpoints, $G$
contains the second graph in Figure \ref{f:BLOCK} as induced
subgraph. Assume then that $v$ belongs to exactly two blocks,
different from $H^v$, such that each of them has four cutpoints,
and that $H^v$ has at most three cutpoints.

If $H^v$ has three cutpoints, let $w$ be other cutpoint of $G$
such that $H^v = H^w$. If $w$ is labeled $B$, since $s$ is a BFS
order and $v$ is the first vertex labeled $B$, $w$ belongs to at
least two blocks, different from $H^v$, such that each of them has
four cutpoints. Then $G$ contains the third graph in Figure
\ref{f:BLOCK} as induced subgraph. If $w$ is labeled $A$,
conditions $(ii)$, $(iii)$ and $(iv)$ are ``locally'' satisfied by
$v$, $w$, and $H^v$. We can replace $v$ and all the connected
components of $G - v$, except the one containing $H^v-v$, by four
vertices $v_1$, $v_2$, $v_1'$ and $v_2'$ by making $v_1$ and $v_2$
adjacent to each other and to $H^v-v$, $v_1'$ adjacent just to
$v_1$, and $v_2'$ adjacent just to $v_2$. Call $G'$ the obtained
graph. Now the block $H' = H^v-v \cup \{v_1,v_2\}$ of $G'$ has
four cutpoints (all of them labeled $A$), so the label of every
cutpoint placed before $v$ in $s$ remains unchanged in a labeling
of $bc(G')$, and the condition among $(ii)$, $(iii)$ and $(iv)$
that was violated in $G$ is still violated in $G'$. Since all
cutpoints of $H'$ are labeled $A$, $G'$ has one less cutpoint
labeled $B$ than $G$. By inductive hypothesis, $G'$ contains a
graph $F$ of $\F$ as induced subgraph. Notice that $G' -
\{v_1,v_1'\}$ and $G' - \{v_2,v_2'\}$ are isomorphic to an induced
subgraph of $G$. So, since $F$ is connected, if $F$ does not
contain one of $\{v_1,v_2\}$, then $G$ contains $F$ as an induced
subgraph. If $F$ contains $v_1$ and $v_2$, by Proposition
\ref{prop:F}, $F$ contains $H' \cup \{v_1',v_2'\}$, and $H'$ is an
almost endblock of $F$. Let $F'$ be the graph obtained from $F$ by
applying the procedure given in the definition of $\F$ to the
vertices $v_1$ and $v_2$. Then $F'$ belongs to $\F$ and $F'$ is an
induced subgraph of $G$.

If $H^v$ has two cutpoints, conditions $(ii)$, $(iii)$ and $(iv)$
are ``locally'' satisfied by $v$ and $H^v$, and the label of the
other cutpoint of $H^v$ does not depend on the block $H^v$. We can
delete from $G$ all the connected components of $G - v$, except
the one containing $H^v-v$, and call $G'$ the obtained graph. The
block $H$ is now an endblock of $G'$, the label of every cutpoint
placed before $v$ in $s$ remains unchanged in a labeling of
$bc(G')$, and the condition among $(ii)$, $(iii)$ and $(iv)$ that
was violated in $G$ is still violated in $G'$. Moreover, $v$ is no
longer a cutpoint in $G'$, so $G'$ has one less cutpoint labeled
$B$ than $G$. By inductive hypothesis, $G'$ contains a graph $F$
of $\F$ as induced subgraph. Since $G'$ is an induced subgraph of
$G$, so is $F$. $\diamondsuit$

\medskip

As a block $H$ is $H^c$ for all but at most one of its cutpoints
$c$, item $(iii)$ of the previous claim implies that no block has
four cutpoints such that two of them labeled $B$, and item $(iv)$
of the previous claim implies that no block has three cutpoints
labeled $B$.

Since, by item $(i)$, no block has five or more cutpoints, the
possible label multisets for the blocks of $G$ are $\{A\}$,
$\{B\}$, $\{A,A\}$, $\{A,B\}$, $\{B,B\}$, $\{A,A,A\}$,
$\{A,A,B\}$, $\{A,B,B\}$, $\{A,A,A,A\}$ and $\{A,A,A,B\}$.

Let $i=1$, so $G_i$ has only one block $H_1$. Note that $H_1$ is
$H^c$ for every cutpoint $c$ of $G$ belonging to $H_1$. So,
considering the label multiset of the vertices of $H_1$, the cases
$\{A,A,A,B\}$ and $\{A,B,B\}$ cannot arise (by items $(iii)$ and
$(iv)$ of the claim, respectively). In the cases $\{A\}$, $\{B\}$,
and $\{A,A\}$, the block can be represented either as a line
clique or as a cross clique, satisfying the conditions. In the
cases $\{A,B\}$ and $\{B,B\}$, the block can be represented as a
cross clique where one of the labeled vertices is the farthest
North and South line, and the other one is the farthest East and
West line. In the cases $\{A,A,A\}$ and $\{A,A,B\}$, the block can
be represented as a cross clique where one of the labeled vertices
(the vertex labeled $B$ in the second case) is the farthest North
and South line, and the other two are the farthest East,
respectively West, line. In the case $\{A,A,A,A\}$, the block can
be represented as a cross clique where each labeled vertex
corresponds to the farthest North, South, East or West line.

We will proceed now by induction. Let $i > 1$, and let $v:=c(i)$, the
only cutpoint of $H_i$ appearing in $s$ before $H_i$. Let $H_j,
H_{j+1}, \dots, H_i$ be the blocks between $v$ and $H_i$
containing $v$ (it can be $j=i$). As noticed above,
 $H_j, H_{j+1}, \dots, H_i$ are endblocks, and since the first element of $s$ is a block, $j >
 1$. In particular, $H^{v} \subseteq G_{j-1}$. Notice also that for
 $j \leq k \leq i$ and for every cutpoint $c$ of $G$, different from $v$, that belongs to
 $H_k$, it holds $H_k = H^c$.

We know by inductive hypothesis that there is a \VO \
representation of $G_{j-1}$ such that each cutpoint $c$ of $G$
that belongs to $G_{j-1}$ corresponds to the farthest North,
South, East or West line of the line or cross representation of
the clique $H^c$ and, moreover, if $c$ is labeled $B$, then it
corresponds to the farthest North and South, or East and West
(simultaneously) line of the line or cross representation of the
clique $H^c$.

We will show that, possibly refining the grid, we can extend this
representation to a representation of $G_i$ with the desired
properties.

We will consider the possible cases for the label of $v$ and the
remaining labeled vertices of $H_j, \dots, H_i$.\\

\medskip

\underline{Case 1:} $v$ is labeled $B$. \\

Without loss of generality, assume that vertex $v$ corresponds to
the farthest North and South line of the representation of $H^v$,
say $P_v$. As $H^v$ is the only clique of $G_{j-1}$ containing
$v$, $P_v$ has two segments $P_v^N$ and $P_v^S$ that do not
intersect any other path in $G_{j-1}$, and each of them contains
an endpoint of $P_v$.

Since $v$ is labeled $B$, we have that the possible multisets for
the blocks $H_j, \dots, H_i$ are $\{B\}$, $\{A,B\}$, $\{B,B\}$,
$\{A,A,B\}$, $\{A,B,B\}$, and $\{A,A,A,B\}$. By the item $(ii)$ of
the claim, at most two of them have labels $\{A,A,A,B\}$ or
$\{A,B,B\}$ (there are exactly two such blocks in $G$, but some of
them may have index greater than $i$). We will assign to each
block a segment of $P_v^N$ or $P_v^S$, in such a way that the
blocks having labels $\{A,A,A,B\}$ or $\{A,B,B\}$ receive the
segments of $P_v^N$, respectively $P_v^S$, that contain an
endpoint of $P_v$. It is easy to see that we can extend the
representation to a  \VO \ representation of $H$ satisfying the
required properties: in the case of labels $\{B\}$, we add the
remaining vertices in a line clique on the assigned segment; in
the case of labels $\{A,B\}$ or $\{B,B\}$, we add the remaining
vertices in a cross clique on the assigned segment, in such a way
that the other labeled vertex corresponds to the farthest East and
West line of the clique; in the case of labels $\{A,A,B\}$, we add
the remaining vertices in a cross clique on the assigned segment,
in such a way that the other two labeled vertices correspond to
the farthest East, respectively West, line of the clique; in the
case of labels $\{A,B,B\}$, we add the remaining vertices in a
cross clique on the assigned segment, in such a way that the other
vertex labeled $B$ corresponds to the farthest East and West line
of the clique, and the third labeled vertex corresponds to the
farthest North line if the segment assigned contains the North
endpoint of $P_v$, and to the farthest South line, otherwise;
finally, in the case of labels $\{A,A,A,B\}$, we add the remaining
vertices in a cross clique on the assigned segment, in such a way
that two of the other labeled vertices correspond to the farthest
East, respectively West, line of the clique, and the third labeled
vertex corresponds to the farthest North line if the segment
assigned contains the North endpoint of $P_v$, and to the farthest
South line, otherwise.

For a scheme, see the leftmost draw in Figure \ref{f:scheme}.\\

\begin{figure}[h]
\centering{
\includegraphics[width=.7\textwidth]{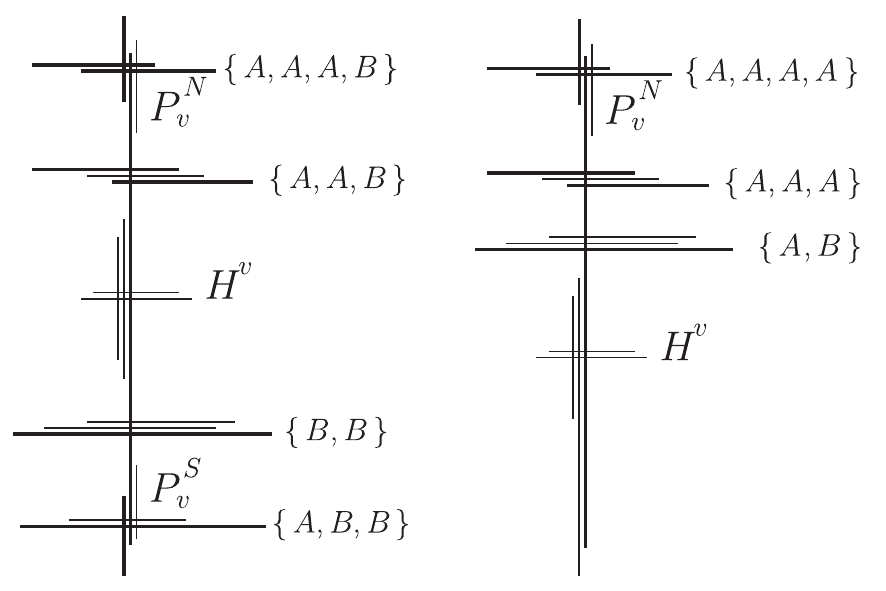}
\caption{Scheme for the extension of a representation of $G_{j-1}$
to $G_{i}$. The cutpoints are represented by bold
lines.}\label{f:scheme}}
\end{figure}

\medskip

\underline{Case 2:} $v$ is labeled $A$. \\

Without loss of generality, assume that vertex $v$ corresponds to
the farthest North line of the representation of $H^v$, say $P_v$.
As $H^v$ is the only clique of $G_{j-1}$ containing $v$, $P_v$ has
a segment $P_v^N$ that does not intersect any other path, and
contains the North endpoint of $P_v$.

Since $v$ is labeled $A$, the possible multisets for the blocks $H_j, \dots, H_i$ are
$\{A\}$, $\{A,A\}$, $\{A,B\}$, $\{A,A,A\}$, $\{A,A,B\}$,
$\{A,A,A,A\}$. Notice that, since for
 $j \leq k \leq i$ and for every cutpoint $c$ of $G$, different from $v$, that belongs to
 $H_k$, it holds $H_k = H^c$, the multisets $\{A,A,A,B\}$ and $\{A,B,B\}$
cannot arise (by items $(iii)$ and $(iv)$ of the claim,
respectively).

By the labeling rules, at most one block in $H_j, \dots, H_i$ has
labels $\{A,A,A,A\}$ or $\{A,A,B\}$. We will assign to each block
a segment of $P_v^N$, in such a way that the block having labels
$\{A,A,A,A\}$ or $\{A,A,B\}$ receives the segment of $P_v^N$ that
contains the North endpoint of $P_v$. It is easy to see that we
can extend the representation to a  \VO \ representation of $H$
satisfying the required properties: in the case of labels $\{A\}$,
we add the remaining vertices in a line clique on the assigned
segment; in the case of labels $\{A,A\}$ or $\{A,B\}$, we add the
remaining vertices in a cross clique on the assigned segment, in
such a way that the other labeled vertex corresponds to the
farthest East and West line of the clique; in the case of labels
$\{A,A,A\}$, we add the remaining vertices in a cross clique on
the assigned segment, in such a way that the other two labeled
vertices correspond to the farthest East, respectively West, line
of the clique; in the case of labels $\{A,A,B\}$, we add the
remaining vertices in a cross clique on the assigned segment, in
such a way that the vertex labeled $B$ corresponds to the farthest
East and West line of the clique, and the third labeled vertex
corresponds to the farthest North line of the clique; finally, in
the case of labels $\{A,A,A,B\}$, we add the remaining vertices in
a cross clique on the assigned segment, in such a way that two of
the other labeled vertices correspond to the farthest East,
respectively West, line of the clique, and the third labeled
vertex corresponds to the farthest North line of the clique.

For a scheme, see the rightmost draw in Figure \ref{f:scheme}.\\

The minimality holds by the equivalence of \VO  \ and $\F$-free
within block graphs, and Corollary \ref{cor:minimal}.
\end{proof}

\begin{cor} Let $G$ be a  chordal diamond-free $VPG$ graph. $G$ is
a $B_0$-VPG graph if and only if  $G$ is $\F$-free.\end{cor}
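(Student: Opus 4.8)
The plan is to deduce the final corollary directly from Theorem~\ref{t:main} by clarifying how the hypotheses match up. The corollary concerns chordal diamond-free $VPG$ graphs, while the main theorem is stated for block $VPG$ graphs, so the essential observation is that these two classes coincide. First I would recall the fact already recorded in the paper: block graphs are precisely the connected chordal diamond-free graphs. Since every graph under consideration is assumed connected (as stated in the Preliminaries), a chordal diamond-free graph is exactly a block graph. Hence a chordal diamond-free $VPG$ graph is the same thing as a block $VPG$ graph.

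Given that identification, the corollary is an immediate restatement of Theorem~\ref{t:main}. I would write: let $G$ be a chordal diamond-free $VPG$ graph; since $G$ is connected, chordal and diamond-free, $G$ is a block graph, so $G$ is a block $VPG$ graph; applying Theorem~\ref{t:main} to $G$, we conclude that $G$ is $B_0$-VPG if and only if $G$ is $\F$-free. The forward and backward implications both transfer verbatim because the two characterizing conditions ($B_0$-VPG membership and $\F$-freeness) are exactly the ones appearing in the theorem.

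I do not anticipate a genuine obstacle here, since this corollary is essentially a translation of the main theorem into the language of chordal diamond-free graphs. The only point requiring a sentence of justification is the equivalence between "block graph" and "connected chordal diamond-free graph," which is the known structural characterization cited earlier in Section~\ref{s:our results}. The one subtlety worth flagging explicitly is the connectedness assumption: the paper works throughout with connected graphs, and without connectedness a chordal diamond-free graph is merely a disjoint union of block graphs; so I would make the connectedness hypothesis visible (it is a standing assumption in the paper) to ensure the identification with block graphs is valid. With that remark in place, the proof reduces to a single invocation of Theorem~\ref{t:main}.
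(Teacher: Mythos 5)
Your proposal matches the paper's own proof: the corollary is deduced directly from Theorem~\ref{t:main} via the stated fact that block graphs are exactly the connected chordal diamond-free graphs, with the standing connectedness assumption making the identification valid. Your added remark about connectedness is a correct and harmless elaboration of the same one-line argument.
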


\begin{proof} It follows directly by the fact that block graphs are connected chordal diamond-free graphs.
\end{proof}

\section{Conclusion}

In this paper we considered $B_0$-VPG graphs, that is,
intersection graphs of paths on a grid such that each path is
either a horizontal path or a vertical path on the grid. We
characterized whether a block graph is a $B_0$-VPG graph in terms
of minimal forbidden induced subgraphs.

The proof of Theorem \ref{t:main} (i.e., the labeling process and
the conditions of the claim) provides an alternative recognition
and representation algorithm for $B_0$-VPG graphs in the class of
block graphs. This algorithm is a certifying algorithm, since if the answer is negative it provides a minimal forbidden induced subgraph.

\end{document}